\theoremstyle{plain}
\newtheorem{theorem}{Theorem}
\newtheorem*{corollary}{Corollary}
\newtheorem*{lemma}{Lemma}
\theoremstyle{definition}
\theoremstyle{remark}
\newcommand{\diam}{\operatorname{diam}}
\begin{document}

\title[]{An Endpoint Alexandrov Bakelman \\Pucci estimate in the plane}

\keywords{Alexandrov-Bakelman-Pucci estimate, maximum principle, Trudinger-Moser inequality.}
\subjclass[2010]{28A75, 35A23, 35B50, 49Q20.} 

\author[]{Stefan Steinerberger}
\address{Department of Mathematics, Yale University}
\email{stefan.steinerberger@yale.edu}

\begin{abstract} The classical Alexandrov-Bakelman-Pucci estimate for the Laplacian states 
$$ \max_{x \in \Omega}{ |u(x)|} \leq \max_{x \in \partial \Omega}{|u(x)|} + c_{s,n} \diam(\Omega)^{2-\frac{n}{s}} \left\| \Delta u\right\|_{L^s(\Omega)}$$
where $\Omega \subset \mathbb{R}^n$, $u \in C^2(\Omega) \cap C(\overline{\Omega})$ and $s > n/2$. The inequality fails for $s = n/2$. A Sobolev embedding result of Milman \& Pustylink, originally phrased in a slightly different context, implies an endpoint inequality: if $n \geq 3$ and $\Omega \subset \mathbb{R}^n$ is bounded, then
$$ \max_{x \in \Omega}{ |u(x)|} \leq \max_{x \in \partial \Omega}{|u(x)|} + c_n \left\| \Delta u\right\|_{L^{\frac{n}{2},1}(\Omega)},$$
where $L^{p,q}$ is the Lorentz space refinement of $L^p$. This inequality fails for $n=2$ and we prove a sharp substitute result: there exists $c>0$ such that for all $\Omega \subset \mathbb{R}^2$ with finite measure
$$ \max_{x \in \Omega}{ |u(x)|} \leq \max_{x \in \partial \Omega}{|u(x)|} + c \max_{x \in \Omega} \int_{y \in \Omega}{   \max\left\{ 1, \log{\left(\frac{|\Omega|}{\|x-y\|^2}  \right)} \right\} \left| \Delta u(y)\right| dy}.$$
This is somewhat dual to the classical Trudinger-Moser inequality -- we also note that it is sharper than the usual estimates given in Orlicz spaces, the proof is rearrangement-free.
The Laplacian can be replaced by any uniformly elliptic operator in divergence form.
\end{abstract}

\maketitle

\vspace{-5pt}

\section{Introduction and main results}
\subsection{Introduction.}
The Alexandrov-Bakelman-Pucci estimate \cite{alex1, alex2, bak, pucci, pucci2} is one of the classical estimates in the study of elliptic partial differential equations. 
In its usual form it is stated for a second order uniformly elliptic operator 
$$ Lu = a_{ij}(x) \partial_{ij} u + b_i(x) \partial_i u$$
with bounded measurable coefficients in a bounded domain $\Omega \subset \mathbb{R}^n$ and $c(x) \leq 0$. The Alexandrov-Bakelman-Pucci estimate then states that for
any $u \in C^2(\Omega) \cap C(\overline{\Omega})$
$$ \sup_{x \in \Omega}{ |u(x)|} \leq \sup_{x \in \partial \Omega}{|u(x)|} + c \diam(\Omega) \left\| L u\right\|_{L^{n}(\Omega)},$$
where $c$ depends on the ellipticity constants of $L$ and the $L^n-$norms of the $b_i$. It is a rather foundational maximum principle and discussed in most of the standard
textbooks, e.g.  Caffarelli \& Cabr\'{e} \cite{xavi}, Gilbarg \& Trudinger \cite{gilbarg}, Han \& Lin \cite{han} and Jost \cite{jost}. The ABP estimate has inspired a very active field of research, we do not
attempt a summary and refer to \cite{cabre2, cabre3,xavi,  gilbarg, tso} and references therein.
Alexandrov \cite{ad2} and Pucci \cite{pucci2} showed that $L^n$ can generically not be replaced by a smaller norm. 
However, for some elliptic operators operators it is possible to get estimates with $L^p$ with $p < n$, see \cite{astala}.
We will start our discussion with the special case of the Laplacian, where the inequality reads, for any $s > n/2$, 
$$ \max_{x \in \Omega}{ |u(x)|} \leq \max_{x \in \partial \Omega}{|u(x)|} + c_{s,n} \diam(\Omega)^{2-\frac{n}{s}} \left\| \Delta u\right\|_{L^s(\Omega)}.$$

\subsection{Results.} The inequality is known to fail in the endpoint $s=n/2$. 
The purpose of our short paper is to note endpoint versions of the inequality. The first result is essentially due to
Milman \& Pustylink \cite{milman} (see also \cite{milman2}), with an alternative proof due to
 Xiao \& Zhai \cite{xiao} (although ascribing it to anyone in particular is not an easy matter, one could reasonably argue that Talenti's seminal paper \cite[Eq. 20]{talenti} already
contains the result without spelling it out).

\begin{theorem} Let $n \geq 3$, let $\Omega \subset \mathbb{R}^n$ be bounded and $u \in C^2(\Omega) \cap C(\overline{\Omega})$. Then 
$$ \max_{x \in \Omega}{ |u(x)|} \leq \max_{x \in \partial \Omega}{|u(x)|} + c_n \left\| \Delta u\right\|_{L^{\frac{n}{2},1}(\Omega)},$$
where $c_n$ only depends on the dimension.
\end{theorem}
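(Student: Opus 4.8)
The plan is to avoid the convex-envelope argument of the classical proof and to compare $u$ directly with the Newtonian potential of its Laplacian. We may assume $\Delta u \in L^{\frac n2,1}(\Omega)$, since otherwise the right-hand side is infinite. Put $f:=\Delta u$, extend it by $0$ to $\mathbb{R}^n$, and let
\[
N_f(x) := -\,c_n'\int_{\Omega}\frac{f(y)}{\|x-y\|^{n-2}}\,dy,
\]
where $c_n'>0$ is chosen so that $\Phi(x)=-c_n'\|x\|^{2-n}$ is the fundamental solution of $\Delta$ on $\mathbb{R}^n$ (this is the only place $n\ge 3$ enters). Granting for the moment that $N_f$ is continuous on $\mathbb{R}^n$, the function $h:=u-N_f$ satisfies $\Delta h=0$ in $\Omega$ in the distributional sense, hence is harmonic there by Weyl's lemma and lies in $C(\overline\Omega)$. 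Since $|h|\le |u|+|N_f|$ on $\partial\Omega$, the maximum principle gives, for every $x\in\Omega$,
\[
|u(x)|\le |h(x)|+|N_f(x)|\le \max_{y\in\partial\Omega}|u(y)| + 2\,\|N_f\|_{L^\infty(\mathbb{R}^n)},
\]
so the whole estimate reduces to the bound $\|N_f\|_{L^\infty(\mathbb{R}^n)}\le c_n\,\|f\|_{L^{\frac n2,1}(\Omega)}$.

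This last estimate is the one genuine computation, and it is the endpoint Hardy--Littlewood--Sobolev inequality $I_2:L^{n/2,1}\to L^\infty$. Fix $x\in\mathbb{R}^n$ and set $g(y):=\|x-y\|^{2-n}$. The super-level set $\{g>\lambda\}$ is the ball $B\!\left(x,\lambda^{\frac1{2-n}}\right)$, of measure $\omega_n\lambda^{\frac n{2-n}}$, so inverting the distribution function yields the decreasing rearrangement $g^*(s)=(s/\omega_n)^{\frac2n-1}$. Since $f$ is supported in $\Omega$, $f^*$ is supported in $[0,|\Omega|]$, and the Hardy--Littlewood rearrangement inequality gives
\[
\left|\int_\Omega\frac{f(y)}{\|x-y\|^{n-2}}\,dy\right|\le \int_0^\infty f^*(s)\,g^*(s)\,ds = \omega_n^{1-\frac2n}\int_0^\infty s^{\frac2n-1}f^*(s)\,ds = c_n\,\|f\|_{L^{\frac n2,1}(\Omega)},
\]
the last equality being just the definition of the Lorentz norm $\|f\|_{L^{p,1}}=\int_0^\infty s^{\frac1p-1}f^*(s)\,ds$ with $p=n/2$ (up to the chosen normalization of $L^{p,q}$). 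Multiplying by $c_n'$ proves the required $L^\infty$ bound for $N_f$, and hence the theorem modulo the continuity of $N_f$.

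The one point needing care — and where I expect to spend most of the remaining effort, although it is soft — is the continuity of $N_f$ on $\mathbb{R}^n$ when $f\in L^{\frac n2,1}(\Omega)$. The cleanest route is density: pick $f_k\in C_c^\infty(\mathbb{R}^n)$ with $f_k\to f$ in $L^{\frac n2,1}$ (legitimate since $1<n/2<\infty$ and the second exponent is $1$); each $N_{f_k}$ is continuous by classical potential theory, and applying the estimate above to $f-f_k$ shows $N_{f_k}\to N_f$ uniformly on $\mathbb{R}^n$, so $N_f\in C(\mathbb{R}^n)$. The same approximation, together with the classical identity $\Delta N_{f_k}=f_k$, gives $\Delta N_f=f$ distributionally, which is what was used for $h$. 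With these routine verifications the argument is complete; I would then note that the scheme is robust — replacing $\Phi$ by a Green's function satisfying the De Giorgi--Nash--Moser bound $G(x,y)\lesssim\|x-y\|^{2-n}$ handles uniformly elliptic divergence-form operators verbatim, and the failure of this proof at $n=2$, where the fundamental solution is logarithmic and $g^*$ is no longer integrable against a general $f^*\in L^{1}$, is exactly what forces the modified statement proved later in the paper.
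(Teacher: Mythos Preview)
Your argument is correct. It follows the classical potential-theoretic route: subtract the Newtonian potential of $\Delta u$, apply the maximum principle to the harmonic remainder, and bound the Riesz potential $I_2 f$ in $L^\infty$ via the Hardy--Littlewood rearrangement inequality (equivalently, Lorentz duality against $\|x-y\|^{2-n}\in L^{n/(n-2),\infty}$). The paper reaches the \emph{same} final estimate but by a different reduction: it writes $u$ as the stationary solution of a heat equation and uses the Feynman--Kac representation $u(x)=\mathbb{E}\,u(\omega_x(t))+\mathbb{E}\int_0^t(\Delta u)(\omega_x(s))\,ds$, then lets $t\to\infty$; the first term recovers the harmonic extension, and the second is bounded by $\int_0^\infty p_\Omega(s,x,y)\,ds\le c_n\|x-y\|^{2-n}$ followed by the identical Lorentz duality. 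Your approach is more elementary and self-contained (no Brownian motion, no heat kernel), and is essentially the Xiao--Zhai/Milman--Pustylnik argument the paper cites; the only cost is a harmless factor of $2$ from $N_f$ not vanishing on $\partial\Omega$. The paper's heat-kernel language is chosen deliberately because it carries over verbatim to the $n=2$ case (Theorem~2), where one must stop the time integral at $t\sim|\Omega|$ and control the probability that Brownian motion exits $\Omega$---a step that has no direct analogue in the static Newtonian-potential picture. Your closing remarks about Green's function bounds and the logarithmic failure at $n=2$ correctly identify this limitation.
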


Here $L^{n/2,1}$ is the Lorentz space refinement of $L^{n/2}$. We note that its norm is slightly larger than $L^{n/2}$ and this turns
out to be sufficient to establish an endpoint result in a critical space for which the geometry of $\Omega$ now longer enters into the inequality. We refer to
Grafakos \cite{grafakos} for an introduction to Lorentz spaces. The proofs given in \cite{milman, milman2, milman3, talenti} rely on rearrangement techniques.
Theorem 1 fails for $n=2$: the Lorentz spaces collapse to $L^{1,1} = L^1$ and the inequality is false in $L^1$  (see below). We obtain
a sharp endpoint result in $\mathbb{R}^2$.

\begin{theorem}[Main result] Let $\Omega \subset \mathbb{R}^2$ have finite measure and let $u \in C^2(\Omega) \cap C(\overline{\Omega})$. Then
$$ \max_{x \in \Omega}{ |u(x)|} \leq \max_{x \in \partial \Omega}{|u(x)|} + c \max_{x \in \Omega} \int_{y \in \Omega}{   \max\left\{ 1, \log{\left(\frac{|\Omega|}{\|x-y\|^2}  \right)} \right\} \left| \Delta u(y)\right| dy}.$$
\end{theorem}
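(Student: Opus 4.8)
The plan is to reduce the inequality to a pointwise upper bound on the Green's function of $\Omega$, and then to establish that bound by an argument using only the co-area formula, the isoperimetric inequality in the plane, and the elementary fact that a connected planar set has boundary length at least twice its diameter. Replacing $u$ by $-u$ if needed and assuming the maximum of $|u|$ over $\overline{\Omega}$ is attained at an interior point $x_0$ (otherwise there is nothing to prove), I set $\lambda=\max_{x\in\partial\Omega}|u(x)|$ and pass to the open set $\Omega^\ast=\{x\in\Omega:u(x)>\lambda\}$, on whose boundary $u\le\lambda$. Since the kernel $\max\{1,\log(|\Omega|/\|x-y\|^2)\}$ is monotone increasing in $|\Omega|$ and $|\Omega^\ast|\le|\Omega|$, it suffices to prove the estimate on $\Omega^\ast$, and after a routine smooth exhaustion I may assume $\Omega$ is bounded with smooth boundary. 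For such $\Omega$, the Green's function $G_\Omega(x_0,\cdot)\le 0$ and the Poisson kernel $\partial_\nu G_\Omega(x_0,\cdot)\ge 0$ (a probability measure on $\partial\Omega$) give $u(x_0)\le\max_{x\in\partial\Omega}|u(x)|+\int_\Omega\bigl(-G_\Omega(x_0,y)\bigr)\,|\Delta u(y)|\,dy$, so everything is reduced to the pointwise claim
$$ -G_\Omega(x_0,y)\ \le\ c\,\max\left\{1,\ \log\!\left(\frac{|\Omega|}{\|x_0-y\|^2}\right)\right\}\qquad(y\in\Omega) $$
with an absolute constant $c$. The Laplacian may be replaced by a uniformly elliptic operator in divergence form already here: its Green's function is still nonnegative, and the flux computation below goes through with the ellipticity constants entering $c$.

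\emph{The Green's function estimate.} Write $w=-G_\Omega(x_0,\cdot)$: it is nonnegative, harmonic on $\Omega\setminus\{x_0\}$, vanishes on $\partial\Omega$, and has a logarithmic pole at $x_0$. For $t>0$ let $D_t=\{w>t\}$ and $\mu(t)=|D_t|$, so $\mu(0^+)=|\Omega|$; each $D_t$ is connected and contains $x_0$, since a component not meeting $x_0$ would carry a harmonic function exceeding its boundary value. The divergence theorem applied on $D_t$ gives the flux identity $\int_{\{w=t\}}|\nabla w|\,d\mathcal H^1=1$ for a.e.\ $t>0$, and combining it with the Cauchy--Schwarz inequality and the co-area formula yields $\mathcal H^1(\{w=t\})^2\le -\mu'(t)$. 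Now I bound $\mathcal H^1(\partial D_t)$ from below in two complementary ways: the isoperimetric inequality gives $\mathcal H^1(\partial D_t)\ge 2\sqrt{\pi\mu(t)}$, hence $-\mu'(t)\ge 4\pi\mu(t)$ and therefore $\mu(t)\le|\Omega|e^{-4\pi t}$ for all $t$; and, fixing $y$ and writing $d=\|x_0-y\|$, for every $t<w(y)$ the set $D_t$ is a connected set containing both $x_0$ and $y$, so $\mathcal H^1(\partial D_t)\ge 2\operatorname{diam}(D_t)\ge 2d$, hence $-\mu'(t)\ge 4d^2$ on that range.

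\emph{Conclusion of the estimate.} Splitting the interval $\bigl(0,w(y)\bigr)$ at the value of $t$ where $\mu$ drops below $d^2/\pi$, using the exponential decay on the lower part (to bound that value of $t$ by $\tfrac1{4\pi}\log(\pi|\Omega|/d^2)$) and the linear decay $-\mu'\ge 4d^2$ on the upper part (which then has length $\le\tfrac1{4\pi}$), I obtain $w(y)\le\tfrac1{4\pi}\bigl(1+\log^+(\pi|\Omega|/d^2)\bigr)$; in the complementary regime $d^2\ge|\Omega|/\pi$ the linear bound alone gives $w(y)\le|\Omega|/(4d^2)\le\tfrac1{4\pi}$. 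Absorbing constants proves the pointwise claim, and inserting it into the representation formula, taking the maximum over $x_0\in\Omega$, and undoing the reductions yields the theorem. For divergence-form operators one either repeats this with the operator's Green's function (the flux step uses ellipticity, with a Maz'ya-type weighted isoperimetric estimate in place of the sharp one) or deduces it from the Laplacian case by the standard comparison.

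I expect the main obstacles to be two. First, making the reduction from an arbitrary set of finite measure — which may be unbounded and highly irregular — to a smooth bounded domain rigorous without degrading the constant, since $\max_{\partial\Omega}|u|$ must be tracked through the exhaustion. Second, carrying out the co-area and flux manipulations and the connectedness and perimeter lower bounds for the level sets $D_t$ with the care required near the logarithmic pole (Sard's theorem, approximation of the a priori only BV level sets). The conceptual crux, however, is the simultaneous use of the two lower bounds for $\mathcal H^1(\partial D_t)$: the isoperimetric one controls the measure of the large super-level sets and, read as $\int_\Omega e^{4\pi w}\lesssim|\Omega|$, is exactly the Trudinger--Moser-type ingredient dual to the classical inequality, while the diameter bound is precisely what converts the naive estimate $|\Omega|/\|x-y\|^2$ into the sharp logarithmic kernel.
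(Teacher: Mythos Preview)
Your argument is correct and takes a genuinely different route from the paper. The paper represents $u$ via the Feynman--Kac formula, integrates the heat kernel only up to time $t=|\Omega|$ (since $\int_0^\infty p_{\mathbb{R}^2}(s,x,y)\,ds$ diverges), and controls the remaining ``boundary'' term by a Brownian-motion exit-time estimate: with probability at least $1/2$, Brownian motion started anywhere in $\Omega$ has left $\Omega$ by time $|\Omega|/8$. This yields a self-improving inequality $\max_\Omega u\le\tfrac12\max_{\partial\Omega}|u|+\tfrac12\max_\Omega u+\text{(integral term)}$, and the kernel bound comes from integrating the Gaussian up to a finite time via the paper's Lemma.

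You instead bound the Green's function of $\Omega$ pointwise by a Talenti-style level-set argument, but with one decisive extra ingredient: the connectedness of the superlevel sets $D_t$ (from the maximum principle) forces $\mathcal H^1(\partial D_t)\ge 2\|x_0-y\|$ whenever $t<w(y)$. This is precisely what rearrangement loses and what upgrades the naive $|\Omega|/\|x-y\|^2$ to the logarithmic kernel; your observation that the isoperimetric half of the argument is literally the Trudinger--Moser bound $\int_\Omega e^{4\pi w}\lesssim|\Omega|$ for the Green's function makes the duality noted in the paper completely transparent. Your approach gives an explicit Green's function estimate with a clean constant (essentially $1/(4\pi)$) and is rearrangement-adjacent rather than rearrangement-free. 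The paper's heat-kernel route, by contrast, transfers immediately to uniformly elliptic divergence-form operators and to manifolds via Aronson bounds, whereas your extension in that direction---while feasible---requires reworking the flux and isoperimetric steps with ellipticity constants, as you note.
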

The result seems to be new. We observe that Talenti \cite{talenti} is hinting at the proof of a slightly weaker result using rearrangement techniques (after his equation (22), see a recent
paper of Milman \cite{milman3} for a complete proof and related results).
$\Omega$ need not be bounded, it suffices to assume that it has finite measure. 
We illustrate sharpness of the inequality with an example on the unit disk: define the radial function $u_{\varepsilon}(r)$ by
$$ u(r) = \begin{cases} \frac{1}{2} - \log{\varepsilon} -  \frac{1}{2}\varepsilon^{-2} r^2 \qquad &\mbox{if}~0 \leq r \leq \varepsilon\\
- \log{r} \qquad &\mbox{if}~\varepsilon \leq r \leq 1. \\ \end{cases}$$
 We observe that $\Delta u_{\varepsilon} \sim \varepsilon^{-2} 1_{\left\{|x| \leq \varepsilon\right\}} $ and $\|u\|_{L^{\infty}} \sim \log{(1/\varepsilon)}$. This shows that the
solution is unbounded as $\varepsilon \rightarrow 0$ while $\|\Delta u\|_{L^1} \sim 1$ remains bounded; in particular, no Alexandrov-Bakelman-Pucci inequality in 
$L^{1}$ is possible for $n=2$.
The example also shows
Theorem 2 to be sharp: the maximum is assumed in the origin and
$$ \int_{y \in \Omega}{   \max\left\{ 1, \log{\left(\frac{|\Omega|}{\|y\|^2}  \right)} \right\}    \varepsilon^{-2} 1_{\left\{|y| \leq \varepsilon\right\}} dy} = \frac{1}{\varepsilon^2} \int_{B(0,\varepsilon)}{ \log{ \left( \frac{\pi}{\|y\|^2}\right)}dy} \sim \log{\left( \frac{1}{\varepsilon}\right)}.$$
The proof will show that the constant $|\Omega|$ inside the logarithm is quite natural but can be improved if the domain is very different from a disk: indeed,
we can get sharper result that recover some of the information that is lost in applying rearrangement type techniques and with a slight modification of the main argument we can
obtain a slightly stronger result capturing more geometric information.

\begin{corollary} Let $\Omega \subset \mathbb{R}^2$ have finite measure and be simply connected and let $u \in C^2(\Omega) \cap C(\overline{\Omega})$. Then
$$ \max_{x \in \Omega}{ |u(x)|} \leq \max_{x \in \partial \Omega}{|u(x)|} + c \max_{x \in \Omega} \int_{y \in \Omega}{   \max\left\{ 1, \log{\left(\frac{\emph{inrad}(\Omega)^2}{\|x-y\|^2}  \right)} \right\} \left| \Delta u(y)\right| dy}.$$
\end{corollary}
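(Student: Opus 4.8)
The plan is to run the same scheme as the proof of Theorem~2, replacing the estimate which there produces the factor $|\Omega|$ by a conformal estimate producing the (in general much smaller) factor $\operatorname{inrad}(\Omega)^2$; this is the ``slight modification'' referred to above. As there, we may assume $\Omega$ is bounded — the finite-measure case reduces to it by the same exhaustion argument — that the maximum of $|u|$ over $\overline\Omega$ is attained at an interior point $x_0$, and, since $|\Delta(-u)|=|\Delta u|$, that $u(x_0)=\max_{\overline\Omega}|u|=:L$; we may also assume $L>M:=\max_{\partial\Omega}|u|$, as otherwise the right-hand side already dominates the left. By Sard's theorem almost every $t\in(M,L)$ is a regular value of $u$. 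Fix such a $t$, let $W_t$ be the connected component of the open set $\{u>t\}$ containing $x_0$, and let $W_t^{\ast}\supseteq W_t$ be its simply connected hull, obtained by adjoining to $W_t$ the bounded components of $\mathbb{R}^2\setminus W_t$.

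Two facts are needed; the second is where simple connectivity is used. First, since $u\le M<t$ on $\partial\Omega$ we have $\overline{W_t}\subseteq\Omega$, and for the regular value $t$ this makes $\partial W_t^{\ast}$ a smooth Jordan curve contained in $\{u=t\}$ and $W_t^{\ast}$ a smooth bounded simply connected domain containing $x_0$. Second, because $\Omega$ is simply connected, no bounded component of $\mathbb{R}^2\setminus W_t$ can contain a point of $\mathbb{R}^2\setminus\Omega$ — the boundary of such a component lies in $\overline{W_t}\subseteq\Omega$, whereas $\mathbb{R}^2\setminus\Omega$ is connected and unbounded — so $W_t^{\ast}\subseteq\Omega$; consequently the ball $B\big(x_0,\operatorname{dist}(x_0,\partial W_t^{\ast})\big)$ lies in $\Omega$, whence
\[
\operatorname{dist}(x_0,\partial W_t^{\ast})\ \le\ \operatorname{inrad}(\Omega).
\]
On the smooth bounded domain $W_t^{\ast}$ the Green representation formula applied to $u-t$, which vanishes on $\partial W_t^{\ast}$, gives
\[
L-t\ =\ u(x_0)-t\ =\ -\int_{W_t^{\ast}}G_{W_t^{\ast}}(x_0,y)\,\Delta u(y)\,dy\ \le\ \int_{W_t^{\ast}}G_{W_t^{\ast}}(x_0,y)\,|\Delta u(y)|\,dy ,
\]
where $G_{W_t^{\ast}}\ge0$ is the Green function of $W_t^{\ast}$.

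The crux is a pointwise bound on $y\mapsto G_{W_t^{\ast}}(x_0,y)$ of the shape appearing in the statement. Let $\varphi\colon W_t^{\ast}\to\mathbb{D}$ be the Riemann map with $\varphi(x_0)=0$ and $\psi:=\varphi^{-1}$; by conformal invariance $G_{W_t^{\ast}}(x_0,y)=\tfrac1{2\pi}\log\tfrac1{|\varphi(y)|}$. The Koebe one-quarter theorem gives $B\big(x_0,\tfrac14|\psi'(0)|\big)\subseteq W_t^{\ast}$, hence $|\psi'(0)|\le 4\operatorname{dist}(x_0,\partial W_t^{\ast})$, and the Koebe growth (distortion) theorem gives $\|x_0-y\|=|\psi(\varphi(y))-\psi(0)|\le\frac{|\psi'(0)|\,|\varphi(y)|}{(1-|\varphi(y)|)^2}$. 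Separating the cases $|\varphi(y)|\le\tfrac12$ and $|\varphi(y)|>\tfrac12$ — in the latter, and more generally whenever $\|x_0-y\|$ is comparable to $\operatorname{dist}(x_0,\partial W_t^{\ast})$, one simply uses $G_{W_t^{\ast}}(x_0,y)=O(1)$ — a short computation produces an absolute constant $c_0$ with
\[
G_{W_t^{\ast}}(x_0,y)\ \le\ c_0\max\!\left\{1,\ \log\frac{\operatorname{dist}(x_0,\partial W_t^{\ast})^2}{\|x_0-y\|^2}\right\}\ \le\ c_0\max\!\left\{1,\ \log\frac{\operatorname{inrad}(\Omega)^2}{\|x_0-y\|^2}\right\}
\]
for all $y\in W_t^{\ast}$, the last inequality using the distance bound above and the monotonicity of $a\mapsto\max\{1,\log(a^2/\|x_0-y\|^2)\}$.

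Substituting this into the representation formula, extending $G_{W_t^{\ast}}$ by zero outside $W_t^{\ast}$ and using $W_t^{\ast}\subseteq\Omega$ together with $\max\{1,\cdot\}\ge0$, one obtains, for every regular value $t\in(M,L)$,
\[
L-t\ \le\ c_0\int_{\Omega}\max\!\left\{1,\ \log\frac{\operatorname{inrad}(\Omega)^2}{\|x_0-y\|^2}\right\}|\Delta u(y)|\,dy ;
\]
letting $t\downarrow M$ along regular values (which are dense in $(M,L)$) and recalling $L=\max_{\overline\Omega}|u|$, $M=\max_{\partial\Omega}|u|$ and $x_0\in\Omega$, this is exactly the inequality of the Corollary. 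The main obstacle is the third paragraph: converting the two Koebe inequalities into a clean logarithmic bound on the Green function with the right geometric scale and an absolute constant, and, on the qualitative side, nailing down the topological claim about the simply connected hull — which is the one place the hypothesis is genuinely used, and hence the reason this refinement is confined to simply connected $\Omega$.
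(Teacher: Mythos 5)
Your proof is correct in outline but takes a genuinely different route from the paper. The paper's proof of the Corollary is the same heat-kernel/Brownian-motion argument as Theorem~2 with exactly one change: the escape-time statement. Instead of ``Brownian motion started at $x\in\Omega$ exits $\Omega$ by time $|\Omega|/8$ with probability $\geq 1/2$,'' the paper uses ``for simply connected $\Omega$ and any $x_0\in\Omega$, Brownian motion exits by time $c\cdot\operatorname{inrad}(\Omega)^2$ with probability $\geq 1/100$.'' Simple connectivity is used there to argue that the boundary $\partial\Omega$ must ``wind around'' and thus occupy a fixed fraction of the annulus $\{\operatorname{inrad}(\Omega)\leq\|y-x_0\|\leq 2\operatorname{inrad}(\Omega)\}$, so the Brownian particle hits it quickly; the quantitative escape estimate is cited from Rachh--Steinerberger. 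Everything else --- the Feynman--Kac representation, the bound $\int_0^T p_\Omega(s,x,y)\,ds\lesssim\max\{1,\log(T/\|x-y\|^2)\}$ from the technical Lemma, the final algebra --- is carried over verbatim with $T\sim\operatorname{inrad}(\Omega)^2$ in place of $T=|\Omega|$.

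Your argument is instead deterministic: Sard's theorem, a level-set component $W_t$, its simply connected hull $W_t^*$, Green's representation on $W_t^*$, and the Koebe quarter and growth theorems to produce the logarithmic bound on $G_{W_t^*}(x_0,\cdot)$. Simple connectivity enters in a different place --- namely to ensure $W_t^*\subseteq\Omega$ (so $\operatorname{dist}(x_0,\partial W_t^*)\leq\operatorname{inrad}(\Omega)$), rather than to control escape rates of Brownian motion. This is a nice alternative: it is self-contained (no external escape-time computation), it exposes the conformal-invariance mechanism behind the $\operatorname{inrad}$ scale, and the pointwise Green's function bound you obtain is in fact slightly stronger, since $\operatorname{dist}(x_0,\partial W_t^*)$ can be much smaller than $\operatorname{inrad}(\Omega)$. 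What it loses is the uniformity of the paper's approach: the heat-kernel method handles finite-measure unbounded $\Omega$ with no extra work, whereas your opening reduction to bounded $\Omega$ ``by the same exhaustion argument'' is not something the paper actually does (there is no exhaustion in the paper's proof) and would need care, since truncations of a simply connected finite-measure domain need not stay simply connected and the boundary maximum can behave badly under exhaustion. If you keep the level-set/conformal route, it would be cleaner to note that $W_t^*$ itself is bounded once $\overline{W_t}$ is compactly contained in $\Omega$, and run the Green/Koebe argument directly on $W_t^*$ without ever truncating $\Omega$.
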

All results remain true if we replace the Laplacian $-\Delta$ by a uniformly elliptic operator in divergence form $-\mbox{div}(a(x)\cdot \nabla u)$ or replace $\mathbb{R}^n$ by a manifold as long as the induced heat kernel satisfies Aronson-type bounds \cite{aronson}.

\subsection{Related results.} There is a trivial connection between Alexandrov-Bakelman-Pucci estimates and second-order Sobolev inequalities that, to the best of our knowledge, has never been
made explicit. After constructing 
\begin{align*}
 \Delta \phi &= 0 \qquad \mbox{in}~\Omega\\ 
\phi &= u \qquad \mbox{on}~\partial \Omega
\end{align*}
we may trivially estimate, using the maximum principle for harmonic functions,
$$ \max_{x \in \Omega}{ |u(x)|} \leq \max_{x \in  \Omega}{ |\phi(x)|} + \max_{x \in \Omega}{ |u(x) - \phi(x)|}  \leq \max_{x \in \partial \Omega}{ |u(x)|} + \max_{x \in \Omega}{ |u(x) - \phi(x)|}.$$
This reduces the problem to studying functions $u \in C^2(\Omega)$ that vanish on the boundary and verifying the validity of estimates of the type
$$ \| u\|_{L^{\infty}(\Omega)} \lesssim_{\Omega} \| \Delta u \|_{X}.$$
The Alexandroff-Bakelman-Pucci estimate is one such estimate. These objects have been actively studied for a long time, see e.g. \cite{cianchi, biez2, xiao} and references therein. 
 Theorem 1 can thus be restated as second-order Sobolev inequality in the endpoint $p=\infty$ and requiring a Lorentz-space refinement; it can be equivalently stated as
$$ \| u \|_{L^{\infty}(\mathbb{R}^n)} \leq c_n \| \Delta u \|_{L^{\frac{n}{2}, 1}(\mathbb{R}^n)} \qquad \mbox{for all}~u \in C^{\infty}_c(\mathbb{R}^n), ~n \geq 3.$$
This inequality seems to have first been stated in the literature by Milman \& Pustylink \cite{milman} in the context of Sobolev embedding at the critical scale. Xiao \& Zhai \cite{xiao} derive the inequality via harmonic analysis. The failure of the embedding of the critical Sobolev space into $L^{\infty}$ is classical
$$ W_{0}^{2, \frac{n}{2}}(\Omega) \not\hookrightarrow L^{\infty}(\Omega).$$
There are two natural options: one could either try to find a slightly larger space $Y \supset L^{\infty}(\Omega)$
to have a valid embedding or one could try to find a space slightly smaller than the Sobolev space to have a valid embedding. The result of Milman \& Pustylink \cite{milman}  deals with the second question. From the point of view of studying Sobolev spaces, the first question is quite a bit more relevant since it investigates extremal behavior of
functions in a Sobolev space and has been addressed in many papers \cite{adams, bastero, brezis, moser, milman, perez}. We emphasize the Trudinger-Moser inequality \cite{moser,trudinger}: for $\Omega \subset \mathbb{R}^2$
$$ \sup_{\|\nabla u\|_{L^2} \leq 1}{ \int_{\Omega}{ e^{4\pi |u|^2} dx}} \leq c |\Omega|.$$
Cassani, Ruf \& Tarsi \cite{cas} prove a variant: the condition $\| \Delta u\|_{L^1} < \infty$ suffices to ensure that $u$ has at most logarithmic blow-up. 
These results should be seen as somewhat dual to Theorem 2. Put differently, 
Theorem 2 is a natural converse to this result since it implies that any function with $\| \Delta u \|_{L^1} < \infty$ and logarithmic blow-up has a Laplacian
$\Delta u$ that concentrates its $L^1-$mass.

\section{Proofs}
The proofs are all based on the idea of representing a function $u:\Omega \rightarrow \mathbb{R}$ as the stationary solution of the heat equation with a suitably chosen
right-hand side (these techniques have recently proven useful in a variety of problems \cite{biswas, lierl, manas, stein})
\begin{align*}
v_t + \Delta v &= \Delta u \qquad \mbox{in}~\Omega \\
v &= u \qquad \mbox{on}~\partial \Omega.
\end{align*}
The Feynman-Kac formula then implies a representation of $u(x) = v(t,x)$ as a convolution of the heat kernel and its values in a neighborhood to which
standard estimates can be applied. We use $\omega_x(t)$ to denote
Brownian motion started in $x \in \Omega$ at time $t$; moreover, in accordance with Dirichlet boundary conditions, we will assume that
the boundary is sticky and remains at the boundary once it touches it. The Feynman-Kac formula then implies that for all $t > 0$
$$ u(x) = \mathbb{E} u(\omega_x(t)) + \mathbb{E} \int_{0}^{t}{ (\Delta u)(\omega_x(t)) dt}.$$
This representation will be used in all our proofs. The proof of Theorem 1 will be closely related in spirit to \cite[Lemma 3.2.]{xiao} phrased in
a different language; this language turns out to be useful in the proof of Theorem 2 where an additional geometric argument is required.

\subsection{A Technical Lemma.} The purpose of this section is to quickly prove a fairly basic inequality. The Lemma
already appeared in a slightly more precise form in work of Lierl and the author \cite{lierl}. We only need a special case and prove it for 
completeness of exposition.
\begin{lemma} Let $n \in \mathbb{N}$, let $t>0$, $c_1, c_2 > 0$ and $0 \neq x \in \mathbb{R}^n$. We have
$$ \int_{0}^{t}{ \frac{c_1}{s} \exp \left( -\frac{\|x\|^2}{c_2 s} \right)}ds \lesssim_{c_1, c_2} \left(1 + \max\left\{0, -\log{\left( \frac{\|x\|^2}{c_2 t} \right)} \right\} \right) \exp\left(-\frac{\|x\|^2}{c_2 t}\right).$$
and, for $n \geq 3$, 
$$ \int_{0}^{\infty}{ \frac{c_1}{s^{n/2}} \exp \left( -\frac{\|x\|^2}{c_2 s} \right)}ds \lesssim_{c_1, c_2,n }  \frac{1}{\|x\|^{n-2}}.$$
\end{lemma}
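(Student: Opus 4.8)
The plan is to reduce both estimates to the same substitution and then estimate the resulting elementary integral. For the first inequality, substitute $r = \|x\|^2/(c_2 s)$, so that $s = \|x\|^2/(c_2 r)$ and $ds/s = -dr/r$; the integral becomes
$$ \int_{0}^{t}{ \frac{c_1}{s} \exp\left(-\frac{\|x\|^2}{c_2 s}\right) ds} = c_1 \int_{\|x\|^2/(c_2 t)}^{\infty}{ \frac{e^{-r}}{r}\, dr}. $$
Write $a = \|x\|^2/(c_2 t) > 0$; the claim reduces to the standard exponential-integral bound
$$ \int_{a}^{\infty}{ \frac{e^{-r}}{r}\, dr} \lesssim \left(1 + \max\{0, -\log a\}\right) e^{-a}. $$
I would split into the cases $a \geq 1$ and $a < 1$. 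For $a \geq 1$ one has $1/r \leq 1/a \leq 1$ on the domain, so the integral is at most $e^{-a}$, and $\max\{0,-\log a\} = 0$, giving the bound directly. For $a < 1$, split the integral at $r=1$: on $[1,\infty)$ the piece is $\lesssim e^{-1} \lesssim e^{-a}$ (since $a<1$), while on $[a,1]$ we bound $e^{-r} \leq 1$ but more carefully use $e^{-r} \leq e^{-a}$ fails — instead note $e^{-r}\le 1$ and $\int_a^1 dr/r = -\log a = \log(1/a)$, and then compare this against $\log(1/a)\,e^{-a}$ using $e^{-a} \geq e^{-1}$ on $[0,1]$. Collecting the pieces and adjusting the implied constant yields the claimed inequality with the $\exp(-\|x\|^2/(c_2 t))$ factor restored.

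For the second inequality (with $n \geq 3$), the same substitution $r = \|x\|^2/(c_2 s)$, i.e. $s = \|x\|^2/(c_2 r)$, $ds = -\|x\|^2/(c_2 r^2)\, dr$, turns $\int_0^\infty s^{-n/2} e^{-\|x\|^2/(c_2 s)}\, ds$ into
$$ c_1 \left(\frac{c_2}{\|x\|^2}\right)^{n/2 - 1} \frac{1}{c_2}\int_0^\infty r^{n/2 - 2} e^{-r}\, dr = \frac{c_1\, c_2^{n/2-2}\,\Gamma\!\left(\tfrac{n}{2}-1\right)}{\|x\|^{n-2}}, $$
which is exactly $\lesssim_{c_1,c_2,n} \|x\|^{-(n-2)}$; here $n\geq 3$ guarantees $n/2 - 1 > 0$ so the Gamma integral converges at $0$, and it always converges at $\infty$. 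The power of $\|x\|$ comes out as $\|x\|^{-2(n/2-1)} = \|x\|^{-(n-2)}$, as claimed.

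The only mildly delicate point is the case analysis in the first inequality near $a \to 0$, where one must keep the logarithm on the right-hand side and not lose it — but this is routine once the integral is split at $r=1$. There is no real obstacle; both statements are classical bounds on incomplete Gamma / exponential integral functions, and the substitution $s \mapsto \|x\|^2/(c_2 s)$ is the single idea doing all the work.
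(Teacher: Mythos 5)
Your proof is correct and follows essentially the same route as the paper's: the paper makes the change of variables in two steps ($z = s/\|x\|^2$, then $y = 1/(c_2 z)$), which composes to your single substitution $r = \|x\|^2/(c_2 s)$, and then performs the same split at $a = 1$ for the exponential integral; for $n \geq 3$ the paper simply declares the bound ``trivial'' where you write out the Gamma-function computation. One small inaccuracy in your prose: the inequality $e^{-r} \leq e^{-a}$ on $[a,1]$ does \emph{not} fail --- since $e^{-r}$ is decreasing it holds for all $r \geq a$, and in fact gives the cleanest bound $\int_a^1 e^{-r} r^{-1}\, dr \leq e^{-a}\log(1/a)$ with no detour through $e^{-a} \geq e^{-1}$; also the constant in your second display should be $c_2^{n/2-1}$ rather than $c_2^{n/2-2}$, though neither slip affects the conclusion since the implied constant is allowed to depend on $c_1, c_2, n$.
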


\begin{proof} The substitutions $z= s/|x|^2$ and $y=1/(c_2 z)$ show
$$  \int_{0}^{t}{ \frac{c_1}{s^{}} \exp \left( -\frac{|x|^2}{c_2 s} \right)}ds \lesssim_{c_1,c_2}   \int_{|x|^2/(c_2 t)}^{\infty}{y^{-1} e^{-y} dy}.$$
If $|x|^2/(c_2 d) \leq 1$ we have
 $$ \int_{|x|^2/(c_2 t)}^{\infty}{y^{-1} e^{-y} dy} \lesssim 1 + \int_{|x|^2/(c_2 t)}^{1}{y^{-1} e^{-y} dy} \lesssim 1  + \int_{|x|^2/(c_2 t)}^{1}{y^{-1} dy} \lesssim 1 - \log{ \left(\frac{|x|^2}{c_2 t} \right)}, $$
and if $|x|^2/(c_2 t) \geq 1$ we have
 $$ \int_{|x|^2/(c_2 t)}^{\infty}{y^{-1} e^{-y} dy} \leq \frac{c_2 d}{|x|^2} \int_{|x|^2/(c_2 t)}^{\infty}{e^{-y} dy} =  \frac{c_2 t}{|x|^2} \exp\left(-\frac{|x|^2}{c_2 t}\right) \leq \exp\left(-\frac{|x|^2}{c_2 t}\right).$$ 
Summarizing, this establishes
 $$ \int_{|x|^2/(c_2 t)}^{\infty}{\frac{1}{y} e^{-y} dy} \lesssim \left(1 + \max\left\{0, -\log{\left( \frac{|x|^2}{c_2 t} \right)} \right\} \right) \exp\left(-\frac{|x|^2}{c_2 t}\right),$$
which is the desired statement for $n=2$. The second statement, for $n \geq 3$, is trivial.
\end{proof}

\subsection{Proof of Theorem 1}

\begin{proof} 
We rewrite $u$ as the stationary solution of the heat equation
\begin{align*}
v_t + \Delta v &= \Delta u \qquad \mbox{in}~\Omega \\
v &= u \qquad \mbox{on}~\partial \Omega.
\end{align*}
As explained above, the Feynman-Kac formula implies that for all $t > 0$
$$ u(x) = v(t,x) = \mathbb{E} v(\omega_x(t)) + \mathbb{E} \int_{0}^{t}{ (\Delta u)(\omega_x(t)) dt}.$$
Let $x$ be arbitrary, we now let $t \rightarrow \infty$. The first term is quite simple since we recover the harmonic measure. Indeed, as $ t \rightarrow \infty$, we have
$$ \lim_{t \rightarrow \infty}\mathbb{E} v(\omega_x(t)) = \phi(x) \qquad \mbox{where} \quad \begin{cases} \Delta \phi = 0 ~ \mbox{inside}~\Omega \\
\phi = u ~\mbox{on}~\partial \Omega. \end{cases}$$
This can be easily seen from the stochastic interpretation of harmonic measure. This implies 
$$ \lim_{t \rightarrow \infty}\mathbb{E} v(\omega_x(t))  \leq \max_{x \in \partial \Omega}{ u(x)}.$$

It remains to estimate the second term. We denote the heat kernel on $\Omega$ by $p_{\Omega}(t,x,y)$ and observe 
\begin{align*}
 \left| \mathbb{E} \int_{0}^{t}{(\Delta u)(\omega_x(t)) dt} \right| &\leq \mathbb{E} \int_{0}^{t}{ \left| \Delta u(\omega_x(t))\right| dt} \\
&= \int_{0}^{t}{  \int_{y \in \Omega}{ p_{\Omega}(s,x,y) \left| \Delta u(y)\right| dy} ds} \\
&\leq \int_{y \in \Omega}{  \left( \int_{0}^{\infty}{p_{\Omega}(s,x,y)ds} \right)   \left| \Delta u(y)\right|  dy}
\end{align*}
However, using domain monotonicity $ p_{\Omega}(t,x,y) \leq p_{\mathbb{R}^n}(t, x, y)$ as well as
 the explicit Gaussian form of the heat kernel on $\mathbb{R}^n$ and the Lemma we have,  uniformly in $x,y \in \Omega$,
$$   \int_{0}^{\infty}{p_{\Omega}(s,x,y)ds} \leq  \int_{0}^{\infty}{p_{\mathbb{R}^n}(s,x,y)ds} \leq \frac{c_n}{\|x - y\|^{n-2}}.$$
The duality of Lorentz spaces 
 $$ \| fg \|_{L^1(\mathbb{R}^n)} \leq \|f\|_{L^{\frac{n}{2}, 1}(\mathbb{R}^n)} \| g\|_{L^{\frac{n}{n-2}, \infty}(\mathbb{R}^n)} \quad \mbox{and} \quad \frac{1}{\|x-y\|^{n-2}} \in L^{\frac{n}{n-2},\infty}(\mathbb{R}^n,dy)$$
then implies the desired result
$$ \left| \mathbb{E} \int_{0}^{t}{(\Delta u)(\omega_x(t)) dt} \right|   \leq  c_n\int_{y \in \Omega}{  \frac{\left| \Delta u\right|(y)}{\|x - y\|^{n-2}} dy} \leq   \left\|  \frac{c_n}{\|x-y\|^{n-2}} \right\|_{ L^{\frac{n}{n-2},\infty}}  \| \Delta u\|_{L^{\frac{n}{2}, 1}}.$$
\end{proof}

\subsection{Proof of Theorem 2}

\begin{proof} This argument requires a simple statement for Brownian motion: for all sets $\Omega \subset \mathbb{R}^2$ with finite
volume $|\Omega| < \infty$ and all  $x \in \Omega$, 
$$ \mathbb{P}\left( \exists~0 \leq t \leq \frac{|\Omega|}{8}:  w_x(t) \notin \Omega\right) \geq \frac{1}{2}.$$
We start by bounding the probability from below: for this, we introduce the free Brownian motion $\omega^*_x(t)$ that also starts in $x$ but moves freely through $\mathbb{R}^n$ without getting stuck on the boundary $\partial \Omega$. Continuity of Brownian motion then implies
$$ \mathbb{P}\left( \exists~0 \leq t \leq \frac{|\Omega|}{8}:  w_x(t) \notin \Omega\right)  \geq \mathbb{P}\left(   w^*_x\left(|\Omega|/8\right) \notin \Omega\right).$$
Moreover, we can compute
$$ \mathbb{P}\left(   w^*_x(|\Omega|/8) \notin \Omega\right) = \int_{\mathbb{R}^n \setminus \Omega}{ \frac{\exp\left(-2\|x-y\|^2/ |\Omega|\right)}{(\pi |\Omega|/2)^{}} dy}.$$
We use the Hardy-Littlewood rearrangement inequality to argue that
$$  \int_{\mathbb{R}^n \setminus \Omega}{ \frac{\exp\left(-2\|x-y\|^2/ |\Omega|\right)}{(\pi |\Omega|/2)^{}} dy} \geq  \int_{\mathbb{R}^n \setminus B}{  \frac{\exp\left(-2\|y\|^2/ |B|\right)}{(\pi |B|/2)^{}}dy},$$
where $B$ is a ball centered in the origin having the same measure as $\Omega$. However, assuming $|B| = R^2 \pi$ this quantity can be computed in polar cordinates as
$$  \int_{\mathbb{R}^n \setminus B}{  \frac{\exp\left(-2\|y\|^2/ |B|\right)}{(\pi |B|/2)^{}}dy} = \int_{R}^{\infty}{ \frac{  \exp\left(-2r^2/(R^2 \pi)\right)}{R^2 \pi^2 /2} 2 \pi r dr} = e^{-\frac{2}{\pi}} > \frac{1}{2}.$$
 We return to the representation, valid for all $t > 0$,
$$ v(t,x) = \mathbb{E} v(\omega_x(t)) + \mathbb{E}\int_{0}^{t}{ (\Delta u)(\omega_x(t)) dt}.$$
We will now work with finite values of $t$: the computation above implies that at time $t=|\Omega|$
$$   \left|  \mathbb{E} v(\omega_x(|\Omega|))\right| \leq \frac{1}{2} \max_{x \in \partial \Omega}{|u(x)|} + \frac{ \max_{x \in \Omega}{u(x)}}{2}.$$
Arguing as above and employing the Lemma shows that
\begin{align*}
 \left| \mathbb{E} \int_{0}^{|\Omega|}{(\Delta u)(\omega_x(t)) dt} \right|  &\leq \int_{y \in \Omega}{  \left( \int_{0}^{|\Omega|}{p(s,x,y)ds} \right)\left| \Delta u(y)\right| dy}  \\
&\lesssim \|\Delta u\|_{L^1} + \int_{y \in \Omega}{   \max\left\{ 0, \log{\left(\frac{|\Omega|}{\|x-y\|^2}  \right)} \right\} \left| \Delta u(y)\right| dy} \\
&\lesssim \int_{y \in \Omega}{   \max\left\{ 1, \log{\left(\frac{|\Omega|}{\|x-y\|^2}  \right)} \right\} \left| \Delta u(y)\right| dy}.
\end{align*}
We can now pick $x \in \Omega$ so that $u$ assumes its maximum there and argue
\begin{align*}
 \max_{x \in \Omega}{u(x)} &= v(|\Omega|,x) = \mathbb{E} v(\omega_x(|\Omega|)) + \mathbb{E}\int_{0}^{|\Omega|}{ (\Delta u)(\omega_x(t)) dt} \\
&\leq \frac{1}{2} \max_{x \in \partial \Omega}{|u(x)|} + \frac{ \max_{x \in \Omega}{u(x)}}{2} + c \max_{x \in \Omega}  \int_{y \in \Omega}{   \max\left\{ 1, \log{\left(\frac{|\Omega|}{\|x-y\|^2}  \right)} \right\} \left| \Delta u(y)\right| dy}
\end{align*}
which implies the desired statement.
\end{proof}

\subsection{Proof of the Corollary}
\begin{proof} 
The proof can be used almost verbatim, we only require the elementary statement that for all simply-connected domains $\Omega \subset \mathbb{R}^2$ and all $x_0\in \Omega$
$$ \mathbb{P}\left( \exists~0 \leq t \leq c \cdot \mbox{inrad}(\Omega)^2:  w_{x_0}(t) \notin \Omega   \right) \geq \frac{1}{100}. $$
The idea is actually rather simple: for any such $x_0$ there exists a point $\|x_0 - x_1\| \leq \mbox{inrad}(\Omega)$ such that $y \notin \Omega$.
 Since
$\Omega$ is simply connected, the boundary is an actual line enclosing the domain: in particular, the disk of radius $m \mbox{inrad}(\Omega)$ centered around $x_0$
either already contains the entire domain $\Omega$ or has a boundary of length at least $(2m-2) \cdot \mbox{inrad}(\Omega)$ (an example being close to the extremal case is shown
in Figure 1).
\begin{center}
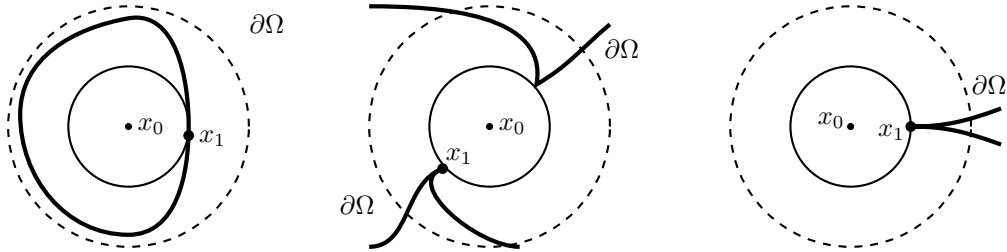
\begin{figure}[h!]
\begin{tikzpicture}[scale =0.8]
\filldraw (0,0) circle (0.05cm);
\draw [thick] (0,0) circle (1cm);
\draw [thick, dashed] (0,0) circle (2cm);
\draw [ultra thick] (-2, 2) to[out=0, in=80] (0.78, 0.7) to[out = 30, in =220] (2, 1.7);
\draw [ultra thick] (-2, -2) to[out=0, in=200] (-0.78, -0.7) to[out = 200, in =180] (0.5, -2);
\filldraw (-0.78,-0.7) circle (0.08cm);
\node at (-0.5, -0.5) {$x_1$};
\node at (0.38, 0) {$x_0$};
\node at (2.2, 1.3) {$\partial \Omega$};
\node at (-2.2, -1.3) {$\partial \Omega$};

\filldraw (-6,0) circle (0.05cm);
\draw [thick] (-6,0) circle (1cm);
\draw [thick, dashed] (-6,0) circle (2cm);
\draw [ultra thick] (-5, 0) to[out=90, in=10] (-6, 1.8) to[out = 190, in =90] (-7.8, 0.2)  to[out = 270, in =180] (-6, -1.8) to[out = 0, in =270] (-5, 0);
\filldraw (-5,-0.15) circle (0.08cm);
\node at (-4.6, -0.2) {$x_1$};
\node at (0.68+5, 0.1) {$x_0$};
\node at (0.68+6, -0.1) {$x_1$};
\filldraw (7,0) circle (0.08cm);
\node at (2.3+6, 0.7) {$\partial \Omega$};

\filldraw (6,0) circle (0.05cm);
\draw [thick] (6,0) circle (1cm);
\draw [thick, dashed] (6,0) circle (2cm);
\draw [ultra thick] (8.5, 0.3) to[out=200, in=0] (7, 0) to[out = 0, in =160] (8.5, -0.3);
\node at (0.38-6, 0) {$x_0$};
\node at (2.3-6, 1.7) {$\partial \Omega$};
\end{tikzpicture}
\caption{The point of maximum $x_0$, the circle with radius $d(x_0, \Omega)$, the circle with radius $2 d(x_0, \Omega)$ (dashed) and the possible local geometry of $\partial \Omega$.}
\end{figure}
\end{center}

It turns out that $m=2$ is already an admissible choice, the computations are carried out in earlier work of M. Rachh and the author \cite{manas}.

\end{proof}

\textbf{Acknowledgement.} The author is grateful to Mario Milman for discussions about the history of some of these results.

\end{document}